\documentclass{article}
\usepackage{amsmath, amssymb, amsfonts, amsthm, dsfont}
\usepackage{tikz}

\let\originalleft\left
\let\originalright\right
\renewcommand{\left}{\mathopen{}\mathclose\bgroup\originalleft}
\renewcommand{\right}{\aftergroup\egroup\originalright}


\newcommand{\ind}[1]{\mathds{1}_{\left\lbrace #1 \right\rbrace}}


\newcommand{\dd}{\mathrm{d}}




\theoremstyle{plain}

\newtheorem{theorem}{Theorem}[section]

\newtheorem{lemma}[theorem]{Lemma}

\theoremstyle{definition}


\title{On the upper bound in Varadhan's Lemma}
\author{H. M. Jansen$^{1,2}$, M. R. H. Mandjes$^{1}$, K. De Turck$^{2}$, S. Wittevrongel$^{2}$}

\begin{document}
\maketitle

\begin{abstract}
\noindent In this paper, we generalize the upper bound in Varadhan's Lemma. The standard formulation of Varadhan's Lemma contains two important elements, namely an upper semicontinuous integrand and a rate function with compact sublevel sets. However, motivated by results from queueing theory, we do not assume that rate functions have compact sublevel sets. Moreover, we drop the assumption that the integrand is upper semicontinuous and replace it by a weaker condition. We prove that the upper bound in Varadhan's Lemma still holds under these weaker conditions. Additionally, we show that only measurability of the integrand is required when the rate function is continuous.
\end{abstract}

\noindent {\it Keywords.} Varadhan's Lemma $\star$ exponential integrals $\star$ large deviations principle $\star$ upper bound
\newline

\noindent $^{1}$ Korteweg-de Vries Institute for Mathematics,
University of Amsterdam, Science Park 904, 1098 XH Amsterdam, the Netherlands.
\newline

\noindent $^{2}$ TELIN, Ghent University, Sint-Pietersnieuwstraat 41,
B-9000 Ghent, Belgium.
\newline

\noindent {\it E-mail}. {\tt\{h.m.jansen|m.r.h.mandjes\}@uva.nl}, {\tt\{kdeturck|sw\}@telin.ugent.be}

\section{Introduction}
Exponential integrals often play an important role in the proof of a large deviations principle (LDP). Varadhan's Lemma is a powerful generalization of Laplace's method for computing exponential integrals. Especially the upper bound in Varadhan's Lemma turns out to be a very useful tool for proving LDPs. However, Varadhan's Lemma is stated under somewhat restrictive conditions, which rule out many interesting cases. In particular, certain rate functions arising in queueing theory do not satisfy the conditions of Varadhan's Lemma.
Motivated by this observation, we will generalize the upper bound in Varadhan's Lemma.

\section{Main result}
Let $\mathcal{X}$ be a topological space and denote its Borel $\sigma$-algebra by $\mathcal{B}$. Throughout, we will assume that $\left\lbrace \mu_{n} \right\rbrace_{n \in \mathbb{N}}$ is a sequence of probability measures defined on $\mathcal{B}$. We will say that the sequence $\left\lbrace \mu_{n} \right\rbrace_{n \in \mathbb{N}}$ satisfies an LDP with rate function $J$ if
\begin{align*}
\limsup_{n \to \infty} \frac{1}{n} \log \mu_{n} \left( F \right) &\leq - \inf_{x \in F} J \left( x \right)
\intertext{for any closed set $F \subset \mathcal{X}$ and}
\limsup_{n \to \infty} \frac{1}{n} \log \mu_{n} \left( G \right) &\geq - \inf_{x \in G} J \left( x \right)
\end{align*}
for any open set $G \subset \mathcal{X}$, where $J \colon \mathcal{X} \to \left[ 0,\infty \right]$ is a lower semicontinuous function. Note that we do not assume that $J$ has compact sublevel sets, i.e., we do not assume that $J$ is a good rate function.

An important goal of this paper is to prove the following lemma. Note that this is just the upper bound in Varadhan's Lemma, but without the assumption that $J$ is a good rate function. Moreover, the lemma states that a well known tail condition is both necessary and sufficient for the upper bound to hold. Although this is not very surprising, it is never explicitly stated like this.
\begin{lemma}
Suppose that the sequence of measures $\left\lbrace \mu_{n} \right\rbrace_{n \in \mathbb{N}}$ satisfies an LDP with rate function $J$ and let $\phi \colon \mathcal{X} \to \mathbb{R}$ be an upper semicontinuous function. Then it holds that
\begin{align*}
\limsup_{n \to \infty} \frac{1}{n} \log \int_{\mathcal{X}} e^{n \phi \left( x \right)} \mu_{n} \left( \dd x \right) \leq \sup_{x \in \mathcal{X}} \left[ \phi \left( x \right) - J \left( x \right) \right]
\end{align*}
if and only if
\begin{align*}
\lim_{M \to \infty} \limsup_{n \to \infty} \frac{1}{n} \log \int_{\mathcal{X}} e^{n \phi \left( x \right)} \ind{\phi \left( x \right) > M} \mu_{n} \left( \dd x \right) \leq \sup_{x \in \mathcal{X}} \left[ \phi \left( x \right) - J \left( x \right) \right].
\end{align*}
\end{lemma}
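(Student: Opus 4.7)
The necessity direction is immediate: since $\ind{\phi(x) > M} \leq 1$ pointwise, the tail integral is bounded above by the full integral for every $M$, so any upper bound on the rate of the full integral transfers to the $\lim_{M \to \infty} \limsup_n$ of its tails. For the sufficiency, set $L := \sup_{x \in \mathcal{X}}\left[\phi(x) - J(x)\right]$ and assume $L < \infty$ (otherwise the conclusion is trivial). For any fixed $M \in \mathbb{R}$, the plan is to decompose
\begin{align*}
\int_{\mathcal{X}} e^{n \phi(x)} \mu_{n}(\dd x) = \int_{\mathcal{X}} e^{n \phi(x)} \ind{\phi(x) \leq M} \mu_{n}(\dd x) + \int_{\mathcal{X}} e^{n \phi(x)} \ind{\phi(x) > M} \mu_{n}(\dd x),
\end{align*}
invoke the standard identity $\limsup_{n} \frac{1}{n} \log(a_{n} + b_{n}) = \max\!\left(\limsup_{n} \frac{1}{n} \log a_{n},\, \limsup_{n} \frac{1}{n} \log b_{n}\right)$ to decouple the two pieces, and leave the second term to the tail hypothesis once $M \to \infty$. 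The entire problem then reduces to showing, for every $\epsilon > 0$ and every fixed $M$, that $\limsup_{n} \frac{1}{n} \log \int_{\mathcal{X}} e^{n \phi(x)} \ind{\phi(x) \leq M} \mu_{n}(\dd x) \leq L + \epsilon$.

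To treat this bounded piece, I will further split $\{\phi \leq M\}$ at a lower threshold $-K$ with $K > \abs{L}$. On $\{\phi \leq -K\}$ the integrand is pointwise at most $e^{-nK}$, contributing at most $-K$ to the logarithmic rate, which vanishes upon $K \to \infty$. On $\{-K < \phi \leq M\}$, I partition the \emph{finite} range $(-K, M]$ into finitely many subintervals $(\alpha_{k-1}, \alpha_{k}]$ of width at most $\epsilon$; on each one I bound $e^{n\phi} \leq e^{n \alpha_{k}}$ and $\mu_{n}\{\alpha_{k-1} < \phi \leq \alpha_{k}\} \leq \mu_{n}\{\phi \geq \alpha_{k-1}\}$. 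Upper semicontinuity of $\phi$ makes every set $\{\phi \geq \alpha_{k-1}\}$ closed, so the LDP closed-set upper bound applied to each of the finitely many summands yields
\begin{align*}
\limsup_{n \to \infty} \frac{1}{n} \log \sum_{k} e^{n \alpha_{k}} \mu_{n} \{\phi \geq \alpha_{k-1}\} \leq \max_{k} \left[ \alpha_{k} - \inf_{y :\, \phi(y) \geq \alpha_{k-1}} J(y) \right] \leq L + \epsilon,
\end{align*}
using that $\alpha_{k} \leq \phi(y) + \epsilon$ whenever $\phi(y) \geq \alpha_{k-1}$. Assembling the three regions and letting $M \to \infty$ (tail hypothesis), then $K \to \infty$, then $\epsilon \to 0$, yields the desired bound $L$.

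The main obstacle is precisely the discretisation step: what allows the LDP to be applied term by term is the reduction to \emph{finitely} many closed super-level sets, and this finiteness is possible only after truncating $\phi$ from above by $M$. In the classical Varadhan setting this truncation is absorbed by the good-rate-function assumption through tightness; here that role is played directly by the tail hypothesis, which quantifies the contribution of the upper tail of $\phi$. Aside from this, upper semicontinuity of $\phi$ enters only to guarantee closedness of $\{\phi \geq \alpha_{k-1}\}$, and the lower tail of $\phi$ is handled trivially by the factor $e^{-nK}$.
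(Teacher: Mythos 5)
Your proof is correct and takes essentially the same route as the paper's: truncate $\phi$ from above (delegating the excess to the tail hypothesis), dispose of the far-negative part trivially via a probability-measure bound, discretize the remaining bounded range into finitely many slices, and apply the LDP upper bound to closed sets containing each slice. The only cosmetic differences are that the paper cuts off below at a level $w$ near $\sup_{x}[\phi(x)-J(x)]$ and partitions $[w,b]$ into $k$ equal pieces whose closures it controls via its two alternative conditions, whereas you use width-$\epsilon$ slices of $(-K,M]$ and bound each slice's measure by that of the full closed superlevel set $\{\phi \geq \alpha_{k-1}\}$, which is exactly where upper semicontinuity enters in both arguments.
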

This lemma is an immediate result from the following more general lemma, which is the main result of this paper. 
Its proof is inspired by the proof of Varadhan's Lemma given in \cite{denhollander2000}.
As is customary, we define $\exp \left( - \infty \right) = 0$, $\log \left( 0 \right) = -\infty$ and $\exp \left( \infty \right) = \log \left( \infty \right) = \infty$. Throughout, we will denote the closure of a set $A$ by $\mathrm{cl} A$.
\begin{lemma}
Suppose that the sequence of measures $\left\lbrace \mu_{n} \right\rbrace_{n \in \mathbb{N}}$ satisfies an LDP with rate function $J$. Let $\phi \colon \mathcal{X} \to \left[ -\infty , \infty \right]$ be a Borel measurable function and define $\phi_{M} = \phi \wedge M$ for $M \in \mathbb{R}$. Assume that at least one of the following conditions is true:
\begin{enumerate}
\item $J$ is continuous;
\item the superlevel set $\phi^{-1} \left( \left[ w,\infty \right] \right)$ is closed for every $w \in \mathbb{R}$ satisfying the inequality $w \geq \lim_{M \to \infty} \sup_{x \in \mathcal{X}} \left[ \phi_{M} \left( x \right) - J \left( x \right) \right]$.
\end{enumerate}
Then it holds that
\begin{align*}
\limsup_{n \to \infty} \frac{1}{n} \log \int_{\mathcal{X}} e^{n \phi \left( x \right)} \mu_{n} \left( \dd x \right) \leq \lim_{M \to \infty} \sup_{x \in \mathcal{X}} \left[ \phi_{M} \left( x \right) - J \left( x \right) \right]
\end{align*}
if and only if
\begin{align*}
\lim_{M \to \infty} \limsup_{n \to \infty} \frac{1}{n} \log \int_{\mathcal{X}} e^{n \phi \left( x \right)} \ind{\phi \left( x \right) > M} \mu_{n} \left( \dd x \right) \leq \lim_{M \to \infty} \sup_{x \in \mathcal{X}} \left[ \phi_{M} \left( x \right) - J \left( x \right) \right].
\end{align*}
\end{lemma}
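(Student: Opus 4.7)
The ``only if'' direction is immediate and uses neither of the two listed conditions: since $\int_{\mathcal{X}} e^{n \phi(x)} \ind{\phi(x) > M} \mu_n(\dd x) \leq \int_{\mathcal{X}} e^{n \phi(x)} \mu_n(\dd x)$ for every $M$, an upper bound on the $\limsup_n \tfrac{1}{n} \log$ of the full integral bounds that of the tail integral uniformly in $M$, and the bound is then preserved under $\lim_M$ on the outside.

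For the ``if'' direction, set $L := \lim_{M \to \infty} \sup_{x \in \mathcal{X}} [\phi_M(x) - J(x)]$, which exists in $[-\infty, \infty]$ since $\phi_M$ is non-decreasing in $M$, and start from the pointwise inequality $e^{n \phi} \leq e^{n \phi_M} + e^{n \phi} \ind{\phi > M}$. Integrating, applying $\tfrac{1}{n} \log (a + b) \leq \tfrac{\log 2}{n} + \max (\tfrac{1}{n} \log a, \tfrac{1}{n} \log b)$, and then taking $\limsup_{n \to \infty}$ followed by $\lim_{M \to \infty}$, the tail hypothesis controls the second summand and the problem reduces to proving
\begin{align*}
\lim_{M \to \infty} \limsup_{n \to \infty} \frac{1}{n} \log \int_{\mathcal{X}} e^{n \phi_M(x)} \mu_n(\dd x) \leq L.
\end{align*}

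To bound the truncated integral for a given $M \in \mathbb{R}$ and a small $\epsilon > 0$, I would discretise by partitioning $(-\infty, M]$ as $(-\infty, w_0] \cup (w_0, w_1] \cup \cdots \cup (w_{K-1}, w_K]$ with $w_K = M$ and $\max_k (w_{k+1} - w_k) \leq \epsilon$. A layer-cake estimate, exploiting $\{\phi_M > w_k\} = \{\phi > w_k\}$ whenever $w_k < M$, yields
\begin{align*}
\int_{\mathcal{X}} e^{n \phi_M(x)} \mu_n(\dd x) \leq e^{n w_0} + \sum_{k = 0}^{K - 1} e^{n w_{k+1}} \mu_n (\phi > w_k).
\end{align*}
The crux is then to apply the LDP upper bound to the superlevel sets $\{\phi > w_k\}$, which are not automatically closed. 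Under condition 2, for every $w_k \geq L$ the weak superlevel set $\{\phi \geq w_k\}$ is closed and contains $\{\phi > w_k\}$, so the LDP gives $\limsup_n \tfrac{1}{n} \log \mu_n(\phi > w_k) \leq - \inf_{\phi(x) \geq w_k} J(x)$; combined with $w_k \leq \phi_M$ on $\{\phi \geq w_k\}$ (valid because $w_k \leq M$), the $k$-th summand contributes rate at most $\epsilon + \sup_{x \in \mathcal{X}} [\phi_M(x) - J(x)] \leq \epsilon + L$. For the remaining indices, still under condition 2, with $w_k < L$, the trivial bound $\mu_n \leq 1$ gives rate at most $w_{k+1} \leq L + \epsilon$. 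Under condition 1 I would instead invoke continuity of $J$: the set $\{x \in \mathcal{X} : J(x) < \inf_{\phi(y) > w_k} J(y)\}$ is open and disjoint from $\{\phi > w_k\}$, hence disjoint from $\mathrm{cl}\{\phi > w_k\}$ as well, so $\inf_{x \in \mathrm{cl}\{\phi > w_k\}} J(x) = \inf_{\phi(x) > w_k} J(x)$, and the LDP upper bound applied to this closure produces the same $\epsilon + L$ estimate on every summand without having to separate ``good'' and ``bad'' values of $w_k$.

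Combining the $K + 1$ contributions via $\tfrac{1}{n} \log (K + 1) \to 0$, then letting $w_0 \to -\infty$ and $\epsilon \to 0$, gives $\limsup_{n \to \infty} \tfrac{1}{n} \log \int_{\mathcal{X}} e^{n \phi_M(x)} \mu_n(\dd x) \leq L$ for every $M$, which is exactly what is needed. The main obstacle is precisely this mismatch between the strict superlevel sets $\{\phi > w_k\}$ thrown up by the discretisation and the closed sets on which the LDP upper bound is stated; the two alternative hypotheses repair the gap in genuinely different ways, by continuity of $J$ in condition 1 and by direct closedness of the weak superlevel sets above the target level $L$ in condition 2, with condition 2 requiring the extra bookkeeping of splitting the $w_k$ into two regimes.
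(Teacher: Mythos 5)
Your proof is correct and takes essentially the same route as the paper's: a Varadhan-style discretisation of the range of the truncated integrand, the LDP upper bound applied to closures of level sets, and the two alternative hypotheses used in exactly the ways the paper uses them (continuity of $J$ to identify the infimum over a level set with that over its closure, and closedness of the weak superlevel sets above the limiting value to contain those closures). The only cosmetic differences are that you use a layer-cake bound over strict superlevel sets $\left\lbrace \phi > w_{k} \right\rbrace$ where the paper partitions $\phi_{b}^{-1} \left( \left[ w,b \right] \right)$ into disjoint bands with $w$ already at least the target value, which is why the paper needs no two-regime split under condition 2.
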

\begin{proof}
For notational convenience, define $\beta_{M} = \sup_{x \in \mathcal{X}} \left[ \phi_{M} \left( x \right) - J \left( x \right) \right]$ for $M \in \mathbb{R}$. Note that $\beta_{M}$ is well defined for each $M \in \mathbb{R}$ and that $\beta_{M}$ is nondecreasing in $M$. Hence, $\lim_{M \to \infty} \beta_{M}$ is well defined.

The statement is obviously true if $\lim_{M \to \infty} \beta_{M} = \infty$, so in the remainder of this proof we will assume that $\lim_{M \to \infty} \beta_{M} < \infty$.

Fix any $b \in \mathbb{R}$ such that $b > \lim_{M \to \infty} \beta_{M}$ and pick any $w \in \left( -\infty,b \right]$ such that $w \geq \lim_{M \to \infty} \beta_{M}$. For $k \in \mathbb{N}$, define the measurable sets
\begin{align*}
L^{k}_{i} = \phi^{-1}_{b} \left( \left[ c^{k}_{i-1},c^{k}_{i} \right] \right)
\end{align*}
for $i = 1 , \dotsc , k$, where
\begin{align*}
c^{k}_{i} = w - \frac{i}{k} \left( w-b \right)
\end{align*}
for $i = 0 , \dotsc , k$. Observe that $c^{k}_{i} - c^{k}_{i-1} = - \frac{w-b}{k}$ and that $L = \phi^{-1}_{b} \left( \left[ w,b \right] \right) = \cup_{i=1}^{k} L^{k}_{i}$ for every $k \in \mathbb{N}$.

Obviously, it holds that
\begin{align*}
\limsup_{n \to \infty} \frac{1}{n} \log \int_{\mathcal{X}} e^{n \phi_{b} \left( x \right)} \mu_{n} \left( \dd x \right) &=\\
\limsup_{n \to \infty} \frac{1}{n} \log \left( \int_{L} e^{n \phi_{b} \left( x \right)} \mu_{n} \left( \dd x \right) + \int_{L^{\complement}} e^{n \phi_{b} \left( x \right)} \mu_{n} \left( \dd x \right) \right) &=\\
\max \left\lbrace \limsup_{n \to \infty} \frac{1}{n} \log \int_{L} e^{n \phi_{b} \left( x \right)} \mu_{n} \left( \dd x \right) , \limsup_{n \to \infty} \frac{1}{n} \log \int_{L^{\complement}} e^{n \phi_{b} \left( x \right)} \mu_{n} \left( \dd x \right) \right\rbrace
\end{align*}
and
\begin{align*}
\limsup_{n \to \infty} \frac{1}{n} \log \int_{L^{\complement}} e^{n \phi_{b} \left( x \right)} \mu_{n} \left( \dd x \right) \leq w.
\end{align*}
Now fix $k \in \mathbb{N}$. We have
\begin{align*}
\limsup_{n \to \infty} \frac{1}{n} \log \int_{L} e^{n \phi_{b} \left( x \right)} \mu_{n} \left( \dd x \right)
&\leq \limsup_{n \to \infty} \frac{1}{n} \log \sum_{i=1}^{k}\int_{L^{k}_{i}} e^{n \phi_{b} \left( x \right)} \mu_{n} \left( \dd x \right)\\
&= \max_{i = 1 , \dotsc , k} \limsup_{n \to \infty} \frac{1}{n} \log \int_{L^{k}_{i}} e^{n \phi_{b} \left( x \right)} \mu_{n} \left( \dd x \right)\\
&\leq \max_{i = 1 , \dotsc , k} \limsup_{n \to \infty} \frac{1}{n} \log \int_{L^{k}_{i}} e^{n c^{k}_{i}} \mu_{n} \left( \dd x \right).
\end{align*}
Observe that for $i = 1 , \dotsc , k$ it holds that
\begin{align*}
\limsup_{n \to \infty} \frac{1}{n} \log \int_{L^{k}_{i}} e^{n c^{k}_{i}} \mu_{n} \left( \dd x \right)
&= c^{k}_{i} + \limsup_{n \to \infty} \frac{1}{n} \log \mu_{n} \left( L^{k}_{i} \right)\\
&\leq c^{k}_{i} + \limsup_{n \to \infty} \frac{1}{n} \log \mu_{n} \left( \mathrm{cl} L^{k}_{i} \right)\\
&\leq c^{k}_{i} - \inf_{x \in \mathrm{cl} L^{k}_{i}} J \left( x \right)\\
&= \sup_{x \in \mathrm{cl} L^{k}_{i}} \left[ c^{k}_{i} - J \left( x \right) \right].
\end{align*}

Suppose that the first condition is true. Then
\begin{align*}
\sup_{x \in \mathrm{cl} L^{k}_{i}} \left[ c^{k}_{i} - J \left( x \right) \right] = \sup_{x \in L^{k}_{i}} \left[ c^{k}_{i} - J \left( x \right) \right],
\end{align*}
by continuity of $J$. But $c^{k}_{i-1} = c^{k}_{i} + \frac{w-b}{k}$, so $c^{k}_{i} \leq \phi_{b} \left( x \right) - \frac{w-b}{k}$ for all $x \in L^{k}_{i}$. Hence, we get
\begin{align*}
\sup_{x \in L^{k}_{i}} \left[ c^{k}_{i} - J \left( x \right) \right]
&\leq \sup_{x \in L^{k}_{i}} \left[ \phi_{b} \left( x \right) - \frac{w-M}{k} - J \left( x \right) \right]\\
&\leq \sup_{x \in \mathcal{X}} \left[ \phi_{b} \left( x \right) - J \left( x \right) \right] - \frac{w-b}{k}.
\end{align*}

Suppose that the second condition is true. Then we have $\mathrm{cl} L^{k}_{i} \subset \mathrm{cl} \phi^{-1}_{b} \left( \left[ c^{k}_{i-1},b \right] \right)$ and $\phi^{-1}_{b} \left( \left[ c^{k}_{i-1},b \right] \right) = \phi^{-1} \left( \left[ c^{k}_{i-1},\infty \right] \right)$ is closed by assumption, so $\mathrm{cl} L^{k}_{i} \subset \phi^{-1}_{b} \left( \left[ c^{k}_{i-1},b \right] \right)$. We get $c^{k}_{i} \leq \phi_{b} \left( x \right) - \frac{w-b}{k}$ for all $x \in \mathrm{cl} L^{k}_{i}$ and
\begin{align*}
\sup_{x \in \mathrm{cl} L^{k}_{i}} \left[ c^{k}_{i} - J \left( x \right) \right]
&\leq \sup_{x \in \mathrm{cl} L^{k}_{i}} \left[ \phi_{b} \left( x \right) - \frac{w-M}{k} - J \left( x \right) \right]\\
&\leq \sup_{x \in \mathcal{X}} \left[ \phi_{b} \left( x \right) - J \left( x \right) \right] - \frac{w-b}{k}.
\end{align*}

Note that it does not matter which of the two conditions is true: we get the same inequality in both cases. Consequently, for every $k \in \mathbb{N}$ it holds that
\begin{align*}
\limsup_{n \to \infty} \frac{1}{n} \log \int_{L} e^{n \phi_{b} \left( x \right)} \mu_{n} \left( \dd x \right)
&\leq \max_{i = 1 , \dotsc , k} \limsup_{n \to \infty} \frac{1}{n} \log \int_{L^{k}_{i}} e^{n c^{k}_{i}} \mu_{n} \left( \dd x \right)\\
&\leq \sup_{x \in \mathcal{X}} \left[ \phi_{b} \left( x \right) - J \left( x \right) \right] - \frac{w-b}{k}\\
&\leq w - \frac{w-b}{k},
\end{align*}
so
\begin{align*}
\limsup_{n \to \infty} \frac{1}{n} \log \int_{L} e^{n \phi_{b} \left( x \right)} \mu_{n} \left( \dd x \right) &\leq w.
\end{align*}
Combining all results, we obtain
\begin{align*}
\limsup_{n \to \infty} \frac{1}{n} \log \int_{\mathcal{X}} e^{n \phi_{b} \left( x \right)} \mu_{n} \left( \dd x \right) &=\\
\max \left\lbrace \limsup_{n \to \infty} \frac{1}{n} \log \int_{L} e^{n \phi_{b} \left( x \right)} \mu_{n} \left( \dd x \right) , \limsup_{n \to \infty} \frac{1}{n} \log \int_{L^{\complement}} e^{n \phi_{b} \left( x \right)} \mu_{n} \left( \dd x \right) \right\rbrace &\leq w.
\end{align*}
Because this holds for all $w \in \left( -\infty,b \right]$ with $w \geq \lim_{M \to \infty} \beta_{M}$, it follows immediately that
\begin{align*}
\limsup_{n \to \infty} \frac{1}{n} \log \int_{\mathcal{X}} e^{n \phi_{b} \left( x \right)} \mu_{n} \left( \dd x \right) \leq \lim_{M \to \infty} \sup_{x \in \mathcal{X}} \left[ \phi_{M} \left( x \right) - J \left( x \right) \right]
\end{align*}
for all $b \in \mathbb{R}$.

Now observe that for each $b \in \mathbb{R}$ we have
\begin{align*}
\limsup_{n \to \infty} \frac{1}{n} \log \int_{\mathcal{X}} e^{n \phi \left( x \right)} \mu_{n} \left( \dd x \right) &\leq\\
\limsup_{n \to \infty} \frac{1}{n} \log \left[ \int_{\mathcal{X}} e^{n \phi_{b} \left( x \right)} \mu_{n} \left( \dd x \right) 
+ \int_{\mathcal{X}} e^{n \phi \left( x \right)} \ind{\phi \left( x \right) > b} \mu_{n} \left( \dd x \right) \right] &=\\
\max \left\lbrace \limsup_{n \to \infty} \frac{1}{n} \log \int_{\mathcal{X}} e^{n \phi_{b} \left( x \right)} \mu_{n} \left( \dd x \right) , \limsup_{n \to \infty} \frac{1}{n} \int_{\mathcal{X}} e^{n \phi \left( x \right)} \ind{\phi \left( x \right) > b} \mu_{n} \left( \dd x \right) \right\rbrace &\leq\\
\max \left\lbrace \lim_{M \to \infty} \sup_{x \in \mathcal{X}} \left[ \phi_{M} \left( x \right) - J \left( x \right) \right]
, \limsup_{n \to \infty} \frac{1}{n} \int_{\mathcal{X}} e^{n \phi \left( x \right)} \ind{\phi \left( x \right) > b} \mu_{n} \left( \dd x \right) \right\rbrace,
\end{align*}
which implies the statement of the lemma.
\end{proof}

\bibliographystyle{plain}
\bibliography{ref}

\begin{thebibliography}{1}

\bibitem{denhollander2000}
Frank den Hollander.
\newblock {\em Large Deviations}.
\newblock American Mathematical Society, Providence, 2000.

\end{thebibliography}

\end{document}